\numberwithin{equation}{section}
\newtheorem{thm}{Theorem}[section]
\newtheorem{prop}{Proposition}[section]
\newtheorem{lem}{Lemma}[section]
\newtheorem{rem}{Remark}[section]
\newtheorem{defn}{Definition}[section]
\newtheorem{exm}{Example}[section]
\newtheorem{asm}{Assumption}[section]
\newcommand{\thmref}[1]{Theorem~{\rm \ref{#1}}}
\newcommand{\lemref}[1]{Lemma~{\rm \ref{#1}}}
\newcommand{\propref}[1]{Proposition~{\rm \ref{#1}}}
\newcommand{\exmref}[1]{Example~{\rm \ref{#1}}}
\newcommand{\asmref}[1]{Assumption~{\rm \ref{#1}}}
\renewcommand{\P}{\mathbb{P}}
\newcommand{\R}{\mathbb{R}}
\newcommand{\E}{\mathbb{E}}
\def\one{{\hbox{1{\kern -0.35em}1}}}
\title[]{On the Continuity of Stochastic Exit Time Control Problems}
\thanks{This research is supported in part by the National Science
  Foundation under Grant NSF-DMS-0906257.}  
\thanks{We would like to thank the Corresponding Editor Paul Chow and the anonymous referee for their feedback which helped us improve our paper.}
\author[]{Erhan Bayraktar}
\address{Department of Mathematics, University of Michigan, Ann Arbor,
  MI 48109}
\email{erhan@umich.edu}
\author[]{Qingshuo Song}
\address{Department of Mathematics, City University of Hong Kong}
\email{song.qingshuo@cityu.edu.hk}
\author[]{Jie Yang}
\address{Department of Mathematics, Statistics, and
Computer Science, University of Illinois at Chicago, Chicago, IL
60607}
\email{jyang06@math.uic.edu}
\begin{document}

\date{April 16, 2010}

\maketitle

\begin{abstract}

We determine a weaker sufficient condition than that of Theorem 5.2.1
in Fleming and Soner (2006) for the continuity of the value functions
of stochastic exit time control problems.
\\ \\  {\bf Keywords and Phrases.}
Continuity of the value function, exit time control, degenerate diffusions,
viscosity solutions, the Cauchy problem on bounded domains.
\\ \\
{\bf AMS subject classifications.} 60G20, 93E15.  
\end{abstract}

\section{Introduction}

Let $(\Omega, \mathcal{F}, \mathbb{F}=(\mathcal{F}_s)_{t\le s<\infty}, \P)$ be a filtered probability space
satisfying 
the usual conditions and $W$ be an $\mathbb{R}^{d}$ valued Brownian motion adapted to $\mathbb{F}$. Consider the following stochastic differential equation in $\R^n$
\begin{equation}
  \label{eq:sde}
  d X_s = b(s, X_s, \alpha_s) ds + \sigma (s, X_s, \alpha_s) dW_s, \quad
\end{equation}
where $\alpha_t$ the control belongs to $\mathcal{A}$, the set of all progressively measurable processes with values in a compact subset $A$ of $\mathbb{R}^k$.

Let $O\subset \mathbb{R}^n$ be a
bounded open set, and set $Q = [0,T)\times O$. For a given initial
$(t,x) \in Q$, define $\tau$ as the
first exit time of the $\mathbb{R}^{n+1}$-valued process $(s,X_s)$ from
the bounded domain $Q$, that is
\begin{equation}\label{eq:tau}
  \tau = \inf\{s\ge t: (s,X_s) \notin Q\}.
\end{equation}

Given a running cost function $\ell:
\mathbb{R}_{+} \times \mathbb{R}^n \times A \to \mathbb{R}$ and a terminal
cost function $g: \mathbb{R}_{+} \times \mathbb{R}^n \to \mathbb{R}$,
we define the value function as
\begin{equation}\label{eq:valp}
  V(t,x) = \inf_{\alpha \in \mathcal{A}} \mathbb{E}_{t,x}\left\{\int_t^\tau \ell(s, X_s,
  \alpha_s) ds + g(\tau, X(\tau))\right\},
\end{equation}
in which $\mathbb{E}_{t,x}$ is the expectation operator conditional on
$X_t=x$. Occasionally, we will refer to $X$ as $X^{t,x}$ to emphasize
its initial condition.  

In general one can show that the value function is a viscosity solution of a fully non-linear Hamilton-Jacobi-Bellman equation given that it is a continuous function; see Corollary 3.1 on page 209 of \cite{FS06}.
However, when the domain is bounded, it is not always the case that the
value function is continuous  due to {\it tangency problem} mentioned in
\cite[pp. 278-279]{KD01}, which imposes continuity as an additional assumption.
Consider two underlying processes $X^1 = X^{t,x^1}$ (solid
line) and $X^2 = X^{t,x^2}$ (dotted line) in Figure
\ref{fig:png}. No matter how close $X^1$ and $X^2$ are, the 
difference between their first exit time  $\tau_1$ and $\tau_2$ could
be very large. 
\begin{figure}[htb]
  \centering
  \includegraphics[scale=0.25]{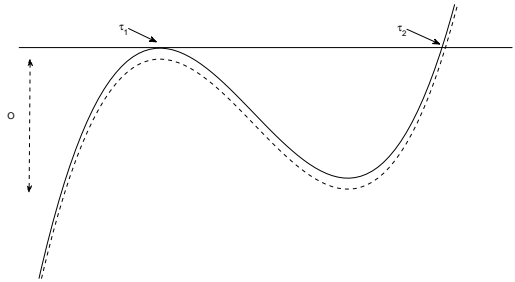}
  \caption{Tangency problem}
  \label{fig:png}
\end{figure}

A sufficient condition for the continuity of the value function is
provided on page 205 of \cite{FS06}.  In this paper we improve this
condition using a probabilistic argument; see \thmref{t-vctn} and
\exmref{e-1d}. We also note that the regularity of the stochastic exit
time control problem has been studied in \cite{LM82}, in which the
value function is shown to be Lipschitz continuous assuming the existence of an appropriate ``global barrier". Under weaker assumptions, similar to the ones considered here,  the
continuity of the value function was obtained by \cite{BB95} and
\cite{IL90} for semi-linear and quasi-linear Dirichlet problems,
respectively, using purely PDE methods.  More recently, the continuity
of viscosity solutions of fully non-linear Dirichlet problems (with
integro-differential terms) is analyzed in \cite{BCI08}. 
Related results can also be found in
\cite{Kov09}, where the Dirichlet problem for the Isaacs Equation is
discussed. 
With respect
to these aforementioned papers our contribution is to give a simple
probabilistic proof of the continuity result for the fully nonlinear Cauchy problems on bounded domains.

The rest of the paper is organized as
follows: In Section 
 \ref{sec:prelim} we recall some preliminary results.  Section \ref{sec:lb}, is
devoted to an important result on the sample path behavior of the state process on the boundary of the domain of the problem. Using the results developed in Section  \ref{sec:lb},
a sufficient condition on the continuity of the value function is derived in Section
\ref{sec:ctn}. Some of the proofs are given in the Appendix.

\section{Preliminaries} \label{sec:prelim}
This section presents definitions and assumptions needed for the setup
of our problem, and collects some relevant classical results.

To proceed, we present the standing assumptions needed for our work. Below we use $|\cdot|$ for the absolute value of a scalar and
$\|\cdot\|$ for the second Euclidean norm.
\begin{asm}
  \label{a-sde} For any $x, x^1, x^2 \in \R^n$, $a \in A$, $t \in
  [0,T]$, functions  $b, \sigma, \ell$, and $g$ satisfy, for some  strictly
  positive constant $K$
\begin{enumerate}
\item
 $   \|b(t,x^1,a) - b(t,x^2,a)\| + \|\sigma(t,x^1,a) - \sigma(t,x^2,a)\|  
   \le K {\|x^1 - x^2\|};$
\item $\|b(t,x,a)\| + \|\sigma(t,x,a)\| \le
  K(1 + \|x\|)$, $\forall (t,x,a) \in [0,T]\times \mathbb{R}^n \times A$;
\item $\ell$ and $g$ are continuous functions;
\item 
 $    |\ell(t,x^1,a) - \ell(t,x^2,a)|  +|g(t,x^1) - g(t,x^2)|
   \le K {\|x^1 - x^2\|}; \; x^1, x^2 \in \R^n, (t,a) \in [0,T] \times A;$
\item $|\ell(t,x,a)| + |g(t,x)| \le K(1 + \|x\|^2 )$.\end{enumerate}
\end{asm}
The first two of our assumptions guarantee that \eqref{eq:sde} has a unique strong solution for a given $\alpha \in \mathcal{A}$.

Next, we present the dynamic programming principle; see e.g. \cite{FS06, MY99}.
\begin{prop}\label{p-dpp}
  For any stopping time $\theta$ with $t\le \theta \le \tau$, 
  \begin{equation}
    \label{eq:dpp}
    V(t,x) = \inf_{\alpha \in \mathcal{A}} \mathbb{E}_{t,x}\left\{\int_t^{\theta}
    \ell(s, X_s, \alpha_s) ds + V(\theta,
    X_{\theta})\right\}.  
  \end{equation}
  \end{prop}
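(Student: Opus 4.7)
The plan is to prove the two opposite inequalities between $V(t,x)$ and $J(t,x) := \inf_{\alpha \in \mathcal{A}} \mathbb{E}_{t,x}\{\int_t^{\theta} \ell(s,X_s,\alpha_s)\,ds + V(\theta, X_\theta)\}$ separately, using a standard concatenation/measurable-selection argument together with the strong Markov property of the controlled SDE \eqref{eq:sde}.

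For the inequality $V(t,x) \ge J(t,x)$, I would take an arbitrary $\alpha \in \mathcal{A}$ and split the exit-time cost at $\theta$:
\begin{equation*}
\mathbb{E}_{t,x}\Big\{\int_t^{\tau} \ell(s,X_s,\alpha_s)\,ds + g(\tau,X_\tau)\Big\} = \mathbb{E}_{t,x}\Big\{\int_t^{\theta} \ell(s,X_s,\alpha_s)\,ds + \mathbb{E}\Big[\int_\theta^{\tau}\ell\,ds + g(\tau,X_\tau)\,\Big|\,\mathcal{F}_\theta\Big]\Big\}.
\end{equation*}
Conditioning on $\mathcal{F}_\theta$, the remaining tail is the cost of the control $\alpha$ (shifted in time) applied to the SDE started from $(\theta,X_\theta)$; by the strong Markov property of $X$ under \asmref{a-sde}(1)--(2), this conditional expectation is pointwise bounded below by $V(\theta, X_\theta)$. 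Taking the infimum over $\alpha$ yields $V(t,x) \ge J(t,x)$.

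For the reverse inequality $V(t,x) \le J(t,x)$, fix $\varepsilon > 0$ and $\alpha \in \mathcal{A}$. The challenge is to glue $\alpha$ on $[t,\theta]$ to an $\varepsilon$-optimal control for $V(\theta, X_\theta)$ in a way that still yields a progressively measurable process. I would invoke a measurable selection theorem (as in \cite{FS06,MY99}) to produce, for each $(s,y) \in Q$, a control $\alpha^{s,y} \in \mathcal{A}$ with
\begin{equation*}
\mathbb{E}_{s,y}\Big\{\int_s^{\tau} \ell(r,X_r,\alpha^{s,y}_r)\,dr + g(\tau,X_\tau)\Big\} \le V(s,y) + \varepsilon,
\end{equation*}
and such that the map $(s,y)\mapsto \alpha^{s,y}$ is jointly measurable. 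Then define the concatenated control
\begin{equation*}
\tilde\alpha_s = \alpha_s\,\one_{\{s\le\theta\}} + \alpha^{\theta,X_\theta}_s\,\one_{\{s>\theta\}},
\end{equation*}
which lies in $\mathcal{A}$. Computing $\mathbb{E}_{t,x}\{\int_t^\tau \ell(s,X_s,\tilde\alpha_s)\,ds + g(\tau,X_\tau)\}$ and conditioning on $\mathcal{F}_\theta$ gives a quantity bounded above by $\mathbb{E}_{t,x}\{\int_t^\theta \ell(s,X_s,\alpha_s)\,ds + V(\theta,X_\theta)\} + \varepsilon$. Since $V(t,x)$ is the infimum of such expressions over all admissible controls, this yields $V(t,x)\le J(t,x) + \varepsilon$, and letting $\varepsilon \downarrow 0$ and taking $\inf$ over $\alpha$ completes the proof.

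The main obstacle is the measurable selection step in the second half: one needs to produce a single jointly measurable family of $\varepsilon$-optimal controls so that the concatenation remains in $\mathcal{A}$. This can be sidestepped in various ways (e.g., restricting to piecewise-constant or Markovian controls and exploiting continuity of the cost in $(s,y)$, or applying a classical measurable selection theorem such as those of Aumann or Benes), and the details are carried out in \cite{FS06, MY99}; the Markovian behavior of $X$ from \eqref{eq:sde}, together with the finiteness of the cost ensured by \asmref{a-sde}(5), makes all conditional expectations above well defined.
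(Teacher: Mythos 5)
The paper does not actually prove Proposition~\ref{p-dpp}; it states the dynamic programming principle and cites \cite{FS06, MY99} for the proof. Your outline is the standard two-inequality argument found in those references --- splitting the cost at $\theta$ and conditioning for $V \ge J$, and concatenating with a measurable family of $\varepsilon$-optimal controls for $V \le J$ --- and it is correct in structure, with the measurable-selection step rightly identified as the genuine technical crux that the cited sources handle in detail.
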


Let  $\forall \varphi\in C^{1,2}(Q)$
\begin{equation*}
    G^a \varphi (t,x) = \varphi_t(t,x) + L_t^a \varphi (x), 
\end{equation*}
and
\begin{equation}\label{eq:lop}
    L_t^a \varphi (x) = b(t,x,a) \cdot D_x \varphi(t,x)
    + \frac 1 2 \hbox{ tr } (\sigma\sigma'(t,x,a) D_x^2 \varphi (t,x)).    
\end{equation}
Using the dynamic programming principle it can be seen that the value function is a solution of  \begin{equation}   \label{eq:ppde}
\begin{split}
  \inf_{a\in A} \{ G^a V(t,x)  + \ell(t,x,a) \} &= 0, \quad (t,x)\in Q,
  \\ V(t,x) &= g(t,x), \quad (t,x) \in \partial^* Q \triangleq [0,T)\times \partial O \cup
 \{T\} \times O,
\end{split}
\end{equation}
in the sense, which we will now describe.

\begin{defn} \label{d-vsol}
Let $u(t,x) = g(t,x)$, $(t,x) \in \partial^* Q $.  (i) It is called a viscosity subsolution of \eqref{eq:ppde}
 if for any $(t_0,x_0; \varphi) \in Q \times C^{2,1}(Q)$ such that $\varphi(t,x) \leq u(t,x)$, $(t,x) \in Q$, and $\varphi(t_0,x_0)=u(t_0,x_0)$ we have that
 \begin{equation*} 
   \inf_{a\in A} \left\{ G^a \varphi(t_0,x_0)  + \ell(t_0,x_0,a) \right\} \geq 0.
\end{equation*}
 (ii) It is called a viscosity supersolution of \eqref{eq:ppde} if for any $(t_0,x_0; \varphi) \in Q \times C^{2,1}(Q)$ such that $\varphi(t,x) \geq u(t,x)$, $(t,x) \in Q$, and $\varphi(t_0,x_0)=u(t_0,x_0)$ we have that
 \begin{equation*} 
  \inf_{a\in A} \left\{ G^a \varphi(t_0,x_0)  + \ell(t_0,x_0,a) \right\} \leq 0.
\end{equation*}  
(iii)  Finally, $u$ is a viscosity solution if it is both a viscosity
  subsolution and a viscosity supersolution. 
\end{defn}

\begin{prop}
  \label{t-exist1}
Suppose $V(t,x) \in C(\bar Q)$ and \asmref{a-sde} hold. Then the value
function $V(t,x)$ is the unique viscosity solution of \eqref{eq:ppde}. 
\end{prop}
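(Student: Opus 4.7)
The plan is to establish two separate facts: first that $V$ satisfies the viscosity sub- and supersolution conditions of \defnref{d-vsol}, and second that any continuous viscosity solution agreeing with $g$ on $\partial^* Q$ coincides with $V$.

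To verify the subsolution property, I would fix $(t_0,x_0) \in Q$ and a test function $\varphi \in C^{1,2}(Q)$ with $\varphi \le V$ on $Q$ and $\varphi(t_0,x_0) = V(t_0,x_0)$. For an arbitrary fixed $a \in A$, pick the constant control $\alpha_s \equiv a$ and the stopping time $\theta_h = (t_0+h) \wedge \tau$, which is strictly greater than $t_0$ almost surely for small $h$ since $Q$ is open. \propref{p-dpp} gives
\[
  V(t_0,x_0) \le \E_{t_0,x_0}\left\{\int_{t_0}^{\theta_h} \ell(s,X_s,a)\,ds + V(\theta_h, X_{\theta_h})\right\}.
\]
Bounding $V(\theta_h, X_{\theta_h})$ below by $\varphi(\theta_h, X_{\theta_h})$, using $\varphi(t_0,x_0) = V(t_0,x_0)$, and applying It\^o's formula to $\varphi(s,X_s)$ (the stochastic integral is a true martingale under \asmref{a-sde} and the bounded stopping), one obtains
\[
  0 \le \E_{t_0,x_0}\left\{\int_{t_0}^{\theta_h} \bigl[G^a \varphi(s,X_s) + \ell(s,X_s,a)\bigr] ds\right\}.
\]
Dividing by $h$ and sending $h \downarrow 0$, the continuity of $G^a \varphi + \ell$ yields $G^a \varphi(t_0,x_0) + \ell(t_0,x_0,a) \ge 0$, and since $a \in A$ is arbitrary, taking the infimum completes the subsolution step.

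The supersolution inequality is argued by contradiction: if $\varphi \ge V$ touches from above at $(t_0,x_0)$ and $\inf_{a\in A}\{G^a \varphi(t_0,x_0) + \ell(t_0,x_0,a)\} = 2\delta > 0$, then compactness of $A$ together with continuity of the coefficients gives a parabolic neighborhood on which the same strict inequality with constant $\delta$ holds uniformly in $a$. Combining \propref{p-dpp} with an $\varepsilon$-optimal control and It\^o's formula applied to $\varphi$ on this neighborhood produces $V(t_0,x_0) \ge \varphi(t_0,x_0) + c \delta h - \varepsilon$ for suitable $h$, contradicting $\varphi(t_0,x_0) = V(t_0,x_0)$ after choosing $\varepsilon \ll \delta h$.

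Uniqueness reduces to a comparison principle: any continuous viscosity subsolution $u$ and supersolution $w$ of \eqref{eq:ppde} with $u \le w$ on $\partial^* Q$ satisfy $u \le w$ on $\bar Q$. This is the standard doubling-of-variables argument with penalty $\tfrac{1}{2\varepsilon}(\|x-y\|^2 + (t-s)^2)$ combined with Ishii's lemma, as developed in \cite{FS06, MY99}. Since $V$ is assumed continuous on $\bar Q$, its boundary values are exactly $g$, so $V$ itself is both a sub- and supersolution with the right boundary data, and comparison pins it down uniquely. The main obstacle is that $\sigma$ may be degenerate, so no uniform parabolicity can be invoked; this is circumvented by relying only on the Lipschitz bounds in \asmref{a-sde} to control the cross terms in the doubling estimate, which is the classical route taken in the cited references.
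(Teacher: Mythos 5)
Your overall route matches the paper's: DPP plus It\^o's formula for the sub- and supersolution properties, and comparison for uniqueness (the paper itself only proves the existence part in its appendix and defers uniqueness to \cite{FS06}, as you effectively do). Your subsolution step, phrased as a direct limit rather than a contradiction, is a legitimate variant, modulo one localization issue: with $\theta_h=(t_0+h)\wedge\tau$ the point $(\theta_h,X_{\theta_h})$ can lie on $\partial O$, where the test function $\varphi\in C^{1,2}(Q)$ need be neither defined nor bounded above by $V$ (the inequality $\varphi\le V$ in \defnref{d-vsol} is only required on the open set $Q$). The paper sidesteps this by stopping at the exit from a parabolic box $[t_0,t_0+h^2)\times B(x_0,h)$ compactly contained in $Q$; you should localize the same way, which also gives the bounded integrand needed for your dominated-convergence step after dividing by $h$.

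The genuine gap is in the supersolution step. Once you have a neighborhood on which $\inf_{a\in A}\{G^a\varphi+\ell\}>\delta/2$, the contradiction reads $0\ge \E_{t_0,x_0}\bigl[\int_{t_0}^{\theta}(G^{\alpha_s}\varphi+\ell)\,ds\bigr]-\varepsilon\ge \tfrac{\delta}{2}\,\E_{t_0,x_0}[\theta-t_0]-\varepsilon$, and $\varepsilon$ (the suboptimality of the control) must be fixed \emph{before} the $\varepsilon$-optimal control, and hence the law of $\theta$, is known. You therefore need a lower bound on $\E_{t_0,x_0}[\theta-t_0]$ that is uniform over all admissible controls; your ``$c\delta h$'' asserts such a bound without proof, and the correct order is $h^2$, not $h$ (a controlled diffusion can exit a ball of radius $h$ in expected time of order $h^2$). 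This uniform estimate $\E_{t_0,x_0}[\theta-t_0]\ge Kh^2$ is precisely the content of the paper's auxiliary \lemref{l-estop}, proved by applying It\^o's formula to $y\mapsto\|y-x_0\|^2$; without it (or an equivalent control-independent estimate) the prescription ``choose $\varepsilon\ll\delta h$'' does not produce a contradiction. Supplying that lemma, and then setting $\varepsilon=\tfrac{\delta}{4}Kh^2$, closes the argument.
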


A complete proof of \propref{t-exist1} can be found in 
\cite{FS06}. In  Appendix, we provide an alternative proof for the 
existence part. 

The characterization of the value function in Proposition~\ref{t-exist1} assumes that it is continuous.
However the value function  is not 
necessarily continuous if the domain is a bounded set (see Figure
\ref{fig:png} and \exmref{e-1d}). In the next section we give a sufficient condition
that guarantees the
continuity of the value function. This improves on the condition provided in
 Section V.2 of
\cite{FS06}.

\section{Sample Path Behavior on the boundary of
  domain} \label{sec:lb}
In this section, we will discuss the sample path behavior of It\^{o}
process on $[0,T)\times \partial O$, which turns out to be crucial for
the continuity of the value function. 

For a given constant vector $a \in A$, let $Y$ be the unique strong solution of the following stochastic differential equation:
$$
  dY_s = b(s, Y_s, a) ds + \sigma (s,
  Y_s,a) dW_s, \quad Y_t=y.$$

The main result  of this section, which we will state next, derives a 
sufficient condition \eqref{eq:t-rhoge1}, under which the process $Y$
must hit $\bar O^c$ infinitely many times in any small duration, if
it starts on $\partial O$. To 
formulate our result, let us denote the signed distance function by 
\begin{equation}\label{eq:rhoh}
  \hat \rho(y) \triangleq \begin{cases}
    \hbox{dist} (y, \bar O), & y \notin O;\\
    -\hbox{dist}(y, {O^c}), & y \in O.
    \end{cases}
\end{equation}

\begin{prop}
  \label{t-rhoge}
Let $(t,y) \in [0,T) \times \partial O$ and $a \in A$.  Assume that $\partial O \in C^2$ and that
\begin{equation}
  \label{eq:t-rhoge1}
  \max\{L_t^a \hat \rho (y), \|\sigma'(t,y,a) D\hat \rho(y)\|\}>0.
\end{equation}
Then,
\begin{equation} \label{t-rhoge3}
  \inf\{s>t: Y_s \notin \bar O\} = t \quad \P-\text{a.s.}
\end{equation}
\end{prop}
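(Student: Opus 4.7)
The plan is to split into two cases according to whether $\kappa := \|\sigma'(t,y,a) D\hat\rho(y)\|$ is positive (Case A) or zero (Case B, in which case necessarily $L := L_t^a \hat\rho(y) > 0$). In both cases I would first set up an It\^o decomposition near $\partial O$: since $\partial O \in C^2$, $\hat\rho$ is $C^2$ on some neighborhood $N$ of $\partial O$, and with the exit time $\eta = \inf\{s > t : Y_s \notin N\} > t$ a.s., It\^o's formula yields, for $s \in [t, \eta)$,
\[
\hat\rho(Y_s) = A_s + M_s, \qquad A_s := \int_t^s L_r^a \hat\rho(Y_r)\,dr, \qquad M_s := \int_t^s \beta(r)' dW_r,
\]
where $\beta(r) := \sigma'(r, Y_r, a) D\hat\rho(Y_r)$, so that $\langle M\rangle_s = \int_t^s \|\beta(r)\|^2 dr$.

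In Case A, continuity of $\beta$ gives $\|\beta(r)\|^2 \to \kappa^2 > 0$ as $r \downarrow t$ a.s., so $\langle M\rangle_s \ge \kappa^2(s-t)/2$ on $s \in (t, t+\delta(\omega)]$ for some $\delta(\omega) > 0$. The Dambis--Dubins--Schwarz time change $M_s = B_{\langle M\rangle_s}$ (for a Brownian motion $B$) and the law of the iterated logarithm for $B$ at the origin yield
\[
\limsup_{s \downarrow t} \frac{M_s}{\sqrt{(s-t) \log\log(1/(s-t))}} > 0 \quad \text{a.s.}
\]
Since $|A_s| \le K(s-t)$ is of strictly smaller order, the same limsup holds for $\hat\rho(Y_s)$, producing a sequence $s_n \downarrow t$ with $\hat\rho(Y_{s_n}) > 0$, i.e.\ $Y_{s_n} \notin \bar O$, which gives \eqref{t-rhoge3}.

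Case B is the main obstacle: here $M$ and $A$ are both of order $h$, so the pathwise LIL picks up a $\log\log$ factor that cannot be absorbed into the drift. The plan is to establish $\P(\hat\rho(Y_{t+h}) > 0) \ge c_0 > 0$ uniformly for small $h$ via second-moment estimates, and then invoke Blumenthal's zero-one law. By continuity of $L_r^a \hat\rho$ and bounded convergence, $\E A_{t+h} = Lh + o(h)$ and $\E A_{t+h}^2 = L^2 h^2 + o(h^2)$. Since $\beta(t) = 0$ and $\sigma, D\hat\rho$ are Lipschitz, $\|\beta(r)\| \le C(|r-t| + \|Y_r - y\|)$, and the standard SDE bound $\E\|Y_r - y\|^2 \le C'(r-t)$ yields $\E M_{t+h}^2 = \E\langle M\rangle_{t+h} = O(h^2)$. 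Cauchy--Schwarz therefore gives $\E\hat\rho(Y_{t+h})^2 \le \bigl(\sqrt{\E A_{t+h}^2}+\sqrt{\E M_{t+h}^2}\bigr)^2 \le C'' h^2$, and the Paley--Zygmund-type estimate
\[
\P(\hat\rho(Y_{t+h}) > 0) \ge \frac{(\E \hat\rho(Y_{t+h}))^2}{\E\hat\rho(Y_{t+h})^2} \ge \frac{L^2}{C''}\bigl(1-o(1)\bigr)
\]
(obtained from $\E[\hat\rho^+] \le \sqrt{\E \hat\rho^{+2}}\sqrt{\P(\hat\rho > 0)}$ together with $\E\hat\rho^+ \ge \E\hat\rho > 0$) supplies the uniform lower bound.

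To close Case B, note that $\{\inf\{s > t : Y_s \notin \bar O\} \le t\} = \bigcap_n\{\exists s \in (t, t+1/n] : Y_s \notin \bar O\}$ lies in $\mathcal{F}_{t+}$ and is therefore $\P$-trivial by Blumenthal's zero-one law applied to the strong solution driven by $\tilde W_\cdot := W_{t+\cdot} - W_t$. Since this event contains $\{\hat\rho(Y_{t+h}) > 0, \eta > t+h\}$ for every $h > 0$ and these have probabilities bounded below for small $h$, it must have probability one, which is \eqref{t-rhoge3}. The delicate point throughout is Case B, where the exact drift--diffusion balance at scale $h$ forces the pathwise limsup argument of Case A to be replaced by an $L^2$ estimate combined with Blumenthal's law.
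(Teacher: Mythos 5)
Your proof is essentially correct, but it follows a genuinely different route from the paper's, and in fact does more work. For Case A the paper removes the drift by Girsanov's theorem and then invokes its Lemma 3.1 (a time-change reduction to Blumenthal's zero--one law for Brownian motion); your Dambis--Dubins--Schwarz plus law-of-the-iterated-logarithm argument instead dominates the drift pathwise, since $|A_s|=O(s-t)$ is negligible against $\sqrt{(s-t)\log\log(1/(s-t))}$. Both are valid; the paper's version avoids the LIL at the cost of a change of measure, yours avoids the change of measure. For Case B the paper simply cites Lemma V.2.1 of Fleming--Soner, so your second-moment/Paley--Zygmund estimate combined with triviality of the germ field $\mathcal{F}^{\tilde W}_{0+}$ of the driving Brownian motion is a self-contained replacement for the outsourced step; the measurability claim is legitimate because $Y$ is a strong solution with deterministic initial data and constant control, so the exit event lies in the (augmented) germ field.

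Two points need repair. First, your bound $\|\beta(r)\|\le C(|r-t|+\|Y_r-y\|)$, which drives $\E\langle M\rangle_{t+h}=O(h^2)$, uses Lipschitz (or at least $1/2$-H\"older) continuity of $\sigma(\cdot,y,a)$ in $t$. Assumption 2.1 of the paper only gives Lipschitz continuity in $x$ and linear growth; with mere joint continuity in $t$ one only gets $\E\langle M\rangle_{t+h}=o(h)$, which destroys the Paley--Zygmund ratio $(\E\hat\rho)^2/\E\hat\rho^2\sim h^2/o(h)\to 0$. The Fleming--Soner argument for this case (show that on the event $\{\hat\rho(Y_s)\le 0\ \forall s\in(t,t+\delta]\}$ the local martingale $M$ satisfies $M_s\le-\epsilon(s-t)<0$, and derive a contradiction from the time-changed representation $M_s=B_{\langle M\rangle_s}$ and Blumenthal's law) needs no such time regularity, so either add the H\"older hypothesis explicitly or switch to that argument. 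Second, the closing step is stated incorrectly: the event $E=\bigcap_n E_n$ with $E_n=\{\exists s\in(t,t+1/n]:Y_s\notin\bar O\}$ does \emph{not} contain $\{\hat\rho(Y_{t+h})>0\}$ for a fixed $h$ (a path can exit at time $t+h$ and at no earlier time). What you want is that $E_n$ is decreasing, $E_n\supseteq\{\hat\rho(Y_{t+1/n})>0,\,\eta>t+1/n\}$, hence
\begin{equation*}
\P(E)=\lim_{n\to\infty}\P(E_n)\ \ge\ \liminf_{n\to\infty}\P\bigl(\hat\rho(Y_{t+1/n})>0\bigr)\ \ge\ c_0>0,
\end{equation*}
and then the zero--one law forces $\P(E)=1$. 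With these two fixes the argument is complete.
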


\begin{rem}\label{rem:sdf-sm}
The assumption that  $\partial O \in C^2$ implies that $\hat{\rho} \in C^{2}$ in a neighborhood of $\partial{O}$; see Lemma 14.16 in \cite{MR1814364}. Also see page 78 of \cite{MR1894435} and the references therein.
\end{rem}

Before we present the proof of this proposition, we will need some preparation. 
First, note that \eqref{t-rhoge3} can be
written as the local behavior of a one-dimensional process $\hat \rho(X_s)$:
\begin{equation*}
  \inf\{s>t: \hat \rho(Y_s) >0\} = t \quad  \P-a.s.
\end{equation*}

Next, we will focus on one-dimensional process, which implies that a
non-degenerate continuous local martingale process $M$ 
starting from zero hits $(0,\infty)$ infinitely many times in any
small time period. If $M$ is a standard Brownian motion, the proof 
is given by Blumenthal 0-1 law \cite[Theorem 7.2.6]{Dur05}. However,
because the distribution of $M$ is not explicitly available, we use the representation of $M$ as a time changed Brownian motion.

\begin{lem} \label{t-mtgl}
Let $\hat B(r)$ be a one-dimensional Brownian motion with
respect to $(\Omega, \mathcal{F}, \mathbb{F}, \mathbb{Q})$.  We assume
that $\hat \sigma$ 
is a one-dimensional progressively measurable process with $\int_t^T
\hat\sigma_r^2 dr <\infty$, so that   $M_s = \int_t^s \hat \sigma_r
d \hat B_r$ is a local martingale.   
Furthermore, we assume that $\hat \sigma_s>0$ $\forall s\in [t, T]$
$\mathbb{Q}$-a.s.  Then 
$\tau = \inf \{s>t: M_s>0\}$ satisfies $\tau=t$ $\mathbb{Q}$-a.s.
\end{lem}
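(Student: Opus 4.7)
The plan is to reduce the claim to the standard fact that a Brownian motion starting at $0$ immediately enters $(0,\infty)$, via a time change. Specifically, I would invoke the Dambis--Dubins--Schwarz theorem to write $M$ as a time-changed Brownian motion, observe that the clock moves strictly forward at time $t$ because $\hat\sigma>0$, and then transfer Blumenthal's $0$--$1$ law from the Brownian motion back to $M$.

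First I would define the strictly increasing, continuous, adapted clock
$$A_s \;=\; \langle M\rangle_s \;=\; \int_t^s \hat\sigma_r^{\,2}\,dr, \qquad s\in[t,T],$$
and note that the hypothesis $\hat\sigma_r>0$ for all $r\in[t,T]$ a.s.\ guarantees that, with $\mathbb{Q}$-probability one, $A_t=0$ and $A_s>0$ for every $s>t$. Next, by the Dambis--Dubins--Schwarz representation (after, if necessary, enlarging $(\Omega,\mathcal{F},\mathbb{F},\mathbb{Q})$ by an independent Brownian motion to extend the clock past $A_T$ to all of $[0,\infty)$), there exists a standard one-dimensional Brownian motion $\tilde B$, starting at $0$ with respect to the time-changed filtration, such that
$$M_s \;=\; \tilde B(A_s), \qquad s\in[t,T].$$

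Now I would apply Blumenthal's $0$--$1$ law to $\tilde B$: since $\tilde B$ is a standard Brownian motion starting at $0$, $\mathbb{Q}(\inf\{u>0:\tilde B(u)>0\}=0)=1$. Fix $\varepsilon>0$. On the full-measure event where $A_{t+\varepsilon}>0$ and $\inf\{u>0:\tilde B(u)>0\}=0$, there exists $u\in(0,A_{t+\varepsilon}]$ with $\tilde B(u)>0$, and by continuity and strict monotonicity of $A$ there is a corresponding $s\in(t,t+\varepsilon]$ with $M_s=\tilde B(A_s)>0$. Taking a countable sequence $\varepsilon_n\downarrow 0$ and intersecting the resulting full-measure events gives $\tau=t$ $\mathbb{Q}$-a.s.

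The only genuine subtlety is the Dambis--Dubins--Schwarz step: as stated the theorem gives a Brownian motion only on the (possibly random and bounded) interval $[0,A_T]$, so one must either restrict the argument to that interval (which is all we need, since the question is purely local at $s=t$) or enlarge the probability space by an independent Brownian motion to obtain a Brownian motion on $[0,\infty)$. Once this is handled, the remainder is the straightforward transfer of Blumenthal's $0$--$1$ law through the strictly increasing time change, and no further estimates on $\hat\sigma$ are required.
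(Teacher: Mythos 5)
Your proposal is correct and follows essentially the same route as the paper: represent $M$ as a time-changed Brownian motion via the Dambis--Dubins--Schwarz theorem, apply Blumenthal's $0$--$1$ law to the underlying Brownian motion, and transfer the conclusion back through the strictly increasing clock $\langle M\rangle$. The only (immaterial) difference is in handling the finite horizon: the paper extends $\hat\sigma$ past $T$ by the constant value $\hat\sigma(T)$ so that $\langle M\rangle_s\to\infty$, whereas you enlarge the space by an independent Brownian motion or restrict to $[0,\langle M\rangle_T]$ --- both are standard and equally valid.
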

\begin{proof}
First, we can extend function $\hat \sigma$ on $[t,T]$ to $[t,\infty)$
by $\hat \sigma(s) = \hat \sigma (T)$ for all $s>T$. Then, the
quadratic variation of $M$ is a strictly increasing function and it satisfies
$$\langle M \rangle_s = \int_t^s \hat \sigma^2 (r) dr \to \infty \hbox{ as
} s \to \infty, \ \mathbb{Q}-a.s.$$
since $\hat \sigma>0$.  For a given positive $s$, define $T(s) \triangleq
\inf\{ r\ge 0: \langle M \rangle(r) >s \}.$ 
The strictly increasing function $T$ satisfies
$ T\left(\langle M \rangle(s)\right) = s.$
The time-changed process $B_s \triangleq M_{T(s)}$
is a $\mathbb{Q}$-Brownian motion under the filtration $\mathcal{G}_s =
\mathcal{F}_{T(s)}$ and $M_s=B_{\langle M \rangle (s)}$; see
e.g. \cite[Theorem 3.4.6]{KS91}. Thus, $\mathbb{Q}$-almost surely, we have
$$\begin{array}{ll} 
\inf\{s: M_s>0\} & 
= \inf\{s: B_{\langle M\rangle(s)}>0\} 
\\& =  \inf\{T \left(\langle M \rangle (s)\right): B_{\langle
  M\rangle(s)}>0\} 
\\& = T\left( \inf\{ \langle M \rangle (s): B_{\langle
  M\rangle(s)}>0\}\right)
\\& = T
(0) = 0. \end{array} $$
The second equality follows from the fact that $\hat\sigma>0$.
The third, on the other hand, follows from the fact that $T$ is increasing.
\end{proof}

We are ready to prove  \propref{t-rhoge}.
\begin{proof}[Proof of \propref{t-rhoge}]
 We will carry out the proof in two steps. \\
  \textbf{(i)} Let us first assume that $\|\sigma'(t,y,a) D\hat \rho(y)\|>0$. 
  Due to the continuity of this function, there exists a stopping time
  $\tau > t$, (which is less than the exit time from the neighborhood
  mentioned in Remark~\ref{rem:sdf-sm}) such that 
  for  $s\in (t,\tau)$ 
  \begin{equation}\label{eq:t-rhoge2}
    \|\sigma'(s,Y_s,a) D\hat{\rho}(Y_s)\| > \varepsilon \triangleq
    \frac 1 2 \|\sigma'(t,y,a) D\hat \rho(y)\| > 0,\quad  \P-a.s.
  \end{equation}

  Thus, applying It\^{o}'s formula, we obtain
  $$\begin{array}{ll}
    \hat \rho (Y_s) & \displaystyle = \int_t^s L_r^a \hat \rho(Y(r))
    dr + \int_t^s D \hat \rho(Y(r)) \sigma(r, Y(r),a) dW(r) \\
    & = \displaystyle \int_t^s L_r^a \hat \rho (Y(r)) dr + \int_t^s
   \|\sigma'(r,Y(r),a) D\hat \rho(Y(r))\| d\widetilde{W}(r) 
  \end{array}$$
  where $\widetilde{W}$ is a one-dimensional $\P$-Brownian motion.
By Girsanov's theorem, there exists  $\mathbb{Q} \sim \P$,
  such that 
  $$\begin{array}{ll}
    \hat \rho (Y_s) \displaystyle = \int_t^s \|\sigma'(r, Y(r),a) D\hat
    \rho(Y(r))\| d \widetilde{W}_r^{\mathcal{Q}}  
  \end{array}$$
  where $\widetilde{W}_r^{\mathcal{Q}}  $ is a $\mathbb{Q}$-Brownian
  motion. Thus, $\hat \rho(Y_s)$ is a local martingale process under
  $\mathbb{Q}$.  \lemref{t-mtgl} implies that
  \begin{equation*}
    \inf\{s>t: \hat \rho(Y_s)>0 \} = t, \quad \mathbb{Q}-\text{a.s.}
  \end{equation*}
  Since $\P$ is equivalent to $\mathbb{Q}$, and the conclusion holds
  $\P$-a.s. \\ 

\noindent \textbf{(ii)} This was a case already proved in \cite[Lemma
V.2.1]{FS06}. 
\end{proof}

\section{Continuity of the value function} \label{sec:ctn}

We will construct a sequence of functions that converge uniformly to
the value function. 
For this purpose let $\hat d(x) = \hat \rho^+(x)$ and define 
$
  \Lambda^\varepsilon(s, X)\triangleq \displaystyle \exp \left\{- \frac 1
  \varepsilon \int_t^s \hat d (X_r) dr \right\}.$
Let 
\begin{equation}
  \label{eq:coste}
  J^\varepsilon (t,x,\alpha) = \mathbb{E}_{t,x}\left \{ \int_t^T
\Lambda^\varepsilon(s, X) \ell(s, X_s, \alpha_s) ds +
\Lambda^\varepsilon(T, X) g(T, X(T))\right\}.
\end{equation}
and
\begin{equation}\label{eq:value-func-eps}
\begin{array}{ll}
  V^\varepsilon(t,x) = \inf_{\alpha\in \mathcal{A}} J^\varepsilon(t,x,\alpha).
\end{array}
\end{equation}
Next, \lemref{l-vecon}  shows the continuity of
this function. Its proof is given in the Appendix.
\begin{lem}
  \label{l-vecon}
Under \asmref{a-sde}, $V^\varepsilon \in
C([0,T]\times \bar O)$. In fact,
$$\begin{array}{ll}
  |V^\varepsilon(t_1,x^1) - V^\varepsilon(t_2,x^2)| \le
  C_{\varepsilon}(\|x^1-x^2\| + |t_1-t_2|^{1/2}),
\end{array}$$
for some positive constant $C_{\varepsilon}$.
\end{lem}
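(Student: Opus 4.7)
The plan is to prove the spatial Lipschitz estimate first (at a frozen time $t$, uniformly in $x\in\bar O$), and then upgrade to the joint Lipschitz-in-$x$, $\frac12$-Hölder-in-$t$ estimate by applying the dynamic programming principle to reduce the time comparison to a spatial one after a short propagation. Two standard SDE bounds will be used throughout: the flow continuity $\E\sup_{r\in[t,T]}\|X^{t,x^1,\alpha}_r-X^{t,x^2,\alpha}_r\|^2\le C\|x^1-x^2\|^2$ and the short-time displacement bound $\E\sup_{r\in[t_1,t_2]}\|X^{t_1,x,\alpha}_r-x\|^2\le C(t_2-t_1)$, both uniform in $\alpha\in\mathcal{A}$ and both consequences of \asmref{a-sde}(1)-(2).

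For the spatial step, fix $t$, take $x^1,x^2\in\bar O$, and run two copies $X^i=X^{t,x^i,\alpha}$ under the same control. I would decompose
$\Lambda^\varepsilon(s,X^1)\ell(s,X^1_s,\alpha_s)-\Lambda^\varepsilon(s,X^2)\ell(s,X^2_s,\alpha_s)=\Lambda^\varepsilon(s,X^1)[\ell(s,X^1_s,\alpha_s)-\ell(s,X^2_s,\alpha_s)]+[\Lambda^\varepsilon(s,X^1)-\Lambda^\varepsilon(s,X^2)]\ell(s,X^2_s,\alpha_s)$,
and similarly for the terminal piece with $g$. The key observations are: $\Lambda^\varepsilon\le 1$; $\ell,g$ are Lipschitz in $x$ by \asmref{a-sde}(4); $\hat d=\hat\rho^+$ is $1$-Lipschitz since $\hat\rho$ is; and for $a,b\ge 0$, $|e^{-a}-e^{-b}|\le|a-b|$, which produces
\[
|\Lambda^\varepsilon(s,X^1)-\Lambda^\varepsilon(s,X^2)|\le \varepsilon^{-1}\int_t^s\|X^1_r-X^2_r\|\,dr.
\]
Using the quadratic growth bound on $\ell,g$ together with $\E\sup_s\|X^i_s\|^2\le C$, and applying Cauchy--Schwarz where needed to handle the cross terms $\E[\ell\,(\Lambda^\varepsilon(s,X^1)-\Lambda^\varepsilon(s,X^2))]$, I obtain $|J^\varepsilon(t,x^1,\alpha)-J^\varepsilon(t,x^2,\alpha)|\le C_\varepsilon\|x^1-x^2\|$ uniformly in $\alpha$; taking the infimum transfers the estimate to $V^\varepsilon$.

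For the temporal step at fixed $x\in\bar O$ with $0\le t_1<t_2\le T$, I would use the multiplicative DPP
\[
V^\varepsilon(t_1,x)=\inf_{\alpha}\E_{t_1,x}\!\left\{\int_{t_1}^{t_2}\!\Lambda^\varepsilon(s,X)\ell(s,X_s,\alpha_s)\,ds+\Lambda^\varepsilon(t_2,X)V^\varepsilon(t_2,X_{t_2})\right\},
\]
which follows from the factorization $\exp\{-\varepsilon^{-1}\!\int_{t_1}^s\hat d\,dr\}=\exp\{-\varepsilon^{-1}\!\int_{t_1}^{t_2}\hat d\,dr\}\cdot\exp\{-\varepsilon^{-1}\!\int_{t_2}^s\hat d\,dr\}$ together with the standard concatenation argument. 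Comparing to $V^\varepsilon(t_2,x)$, I would split
\[
\Lambda^\varepsilon(t_2,X)V^\varepsilon(t_2,X_{t_2})-V^\varepsilon(t_2,x)=\Lambda^\varepsilon(t_2,X)[V^\varepsilon(t_2,X_{t_2})-V^\varepsilon(t_2,x)]+[\Lambda^\varepsilon(t_2,X)-1]V^\varepsilon(t_2,x)
\]
and bound the three contributions: the $[t_1,t_2]$ running-cost integral by $C(t_2-t_1)$ using $\Lambda^\varepsilon\le 1$; the first bracketed term by $C_\varepsilon\E\|X_{t_2}-x\|\le C_\varepsilon(t_2-t_1)^{1/2}$ via the spatial Lipschitz estimate just proved and the short-time SDE bound; and the second by $\|V^\varepsilon\|_\infty\cdot\varepsilon^{-1}\E\!\int_{t_1}^{t_2}\hat d(X_r)\,dr\le C_\varepsilon(t_2-t_1)$, where the uniform bound $\|V^\varepsilon\|_\infty\le C$ follows from $\Lambda^\varepsilon\le 1$ and the quadratic growth of $\ell,g$ on the bounded domain. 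The triangle inequality with the spatial estimate then yields the stated Hölder-Lipschitz estimate.

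The main obstacle, and the reason the constant $C_\varepsilon$ must be allowed to depend on $\varepsilon$, is the $\varepsilon^{-1}$ blow-up in the Lipschitz constant of the discount factor $\Lambda^\varepsilon$; all other ingredients are routine SDE estimates. A minor bookkeeping point is verifying the multiplicative DPP carefully, since the weight $\Lambda^\varepsilon$ depends on the initial time through its lower limit of integration, but this is handled cleanly by the factorization above.
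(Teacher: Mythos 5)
Your proposal is correct and follows essentially the same route as the paper's proof: spatial Lipschitz continuity via the $1$-Lipschitz property of $\hat d$, the elementary bound $|e^{-a}-e^{-b}|\le|a-b|$ for $a,b\ge 0$, Cauchy--Schwarz for the cross terms, and the standard moment estimates, followed by the dynamic programming principle to reduce the time increment to a spatial comparison plus a short-time displacement bound. If anything, your treatment of the temporal step is slightly more careful than the paper's, which silently replaces the weight $\Lambda^\varepsilon(t_2,X)$ by $1$ in front of $V^\varepsilon(t_2,X_{t_2})$; your explicit term $[\Lambda^\varepsilon(t_2,X)-1]V^\varepsilon(t_2,x)$, bounded by $C_\varepsilon(t_2-t_1)$, cleanly accounts for that discrepancy.
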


\begin{thm} \label{t-vctn}
Assume that \asmref{a-sde} and the following hold:
 \begin{enumerate}
  \item $\partial O \in C^2$; 
  \item  $\forall (t,x) \in [0,T) \times \partial O$, there exists an
    $a\in A$ satisfying \eqref{eq:t-rhoge1};
          \item 
    \begin{equation}\label{eq:a-conp2}
      \inf_{a\in A} \{G^a (u + g) (t,x) + \ell(t,x, a)\} \ge 0, \forall
      (t,x) \in  [0,T]\times \mathbb{R}^n.
    \end{equation}
  \end{enumerate}
Then $V$ is continuous on $\bar Q$. 
\end{thm}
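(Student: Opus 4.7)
The plan is to prove continuity of $V$ by showing that the continuous family $V^\varepsilon$ from \eqref{eq:value-func-eps} converges to $V$ uniformly on $\bar Q$ as $\varepsilon \downarrow 0$. By \lemref{l-vecon} each $V^\varepsilon$ belongs to $C(\bar Q)$, so any uniform limit is automatically continuous. The argument splits naturally into a one-sided global comparison between $V$ and $V^\varepsilon$ driven by the classical subsolution hypothesis~\eqref{eq:a-conp2}, and a matching asymptotic bound whose delicate part sits at the lateral boundary and relies on \propref{t-rhoge}.

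For the one-sided estimate, fix $(t,x) \in \bar Q$ and any $\alpha \in \mathcal{A}$. Since $\hat d \equiv 0$ on $\bar O$, the factor $\Lambda^\varepsilon(s,X) \equiv 1$ on $[t,\tau]$, and hence the $\int_t^\tau \ell\,ds$ piece of $J^\varepsilon$ coincides with that of $J$. The discrepancy comes entirely from the tail piece $\E[\int_\tau^T \Lambda^\varepsilon \ell\,ds + \Lambda^\varepsilon(T,X)g(T,X_T)]$ in $J^\varepsilon$ versus $\E[g(\tau,X_\tau)]$ in $J$. Applying It\^o's formula to $\Lambda^\varepsilon(s,X)\bigl(u(s,X_s)+g(s,X_s)\bigr)$ on $[\tau,T]$ and invoking \eqref{eq:a-conp2} with $a$ specialized to $\alpha_s$ turns this tail into an expression of a definite sign, yielding a one-sided comparison between $J^\varepsilon$ and $J$; taking the infimum over $\alpha$ produces the global inequality.

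The harder direction is to show that $V^\varepsilon$ is not too much smaller than $V$ as $\varepsilon \downarrow 0$. In the interior of $Q$ this is essentially stability: for a near-optimal control $\alpha^\varepsilon$ of $V^\varepsilon(t,x)$, truncating at the first exit time $\tau$ controls the tail contribution, since after $\tau$ the state spends positive time in $O^c$ and $\Lambda^\varepsilon$ decays exponentially there. The real difficulty lies at boundary points $(t_0,y_0) \in [0,T) \times \partial O$, where $V(t_0,y_0)=g(t_0,y_0)$ and one must show $V^\varepsilon(t_0,y_0) \to g(t_0,y_0)$. Here hypotheses~(1)--(2) and \propref{t-rhoge} enter decisively: the constant control $\alpha \equiv a$ produced by \eqref{eq:t-rhoge1} makes the associated diffusion $Y$ leave $\bar O$ immediately, so in any interval $[t_0,t_0+\delta]$ the path accumulates a strictly positive amount of time in $O^c$, driving $\Lambda^\varepsilon$ exponentially to zero on the tail and forcing $J^\varepsilon(t_0,y_0,a) \to g(t_0,y_0)$.

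The principal obstacle will be upgrading these pointwise estimates to uniform convergence on all of $\bar Q$. The tangency picture in Figure~\ref{fig:png} breaks naive equicontinuity near $[0,T) \times \partial O$, and the Lipschitz-type constant $C_\varepsilon$ in \lemref{l-vecon} depends on $\varepsilon$, so Arzel\`a--Ascoli cannot be applied directly. I expect the resolution to be a collar decomposition of $\bar Q$: inside a tubular neighborhood of the parabolic boundary $\partial^*Q$ of width $\omega(\varepsilon)$ one applies the boundary estimate above, made uniform by the $C^2$ regularity and compactness of $\partial O$; outside this neighborhood one combines the interior convergence with the $\varepsilon$-dependent modulus from \lemref{l-vecon}. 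A careful calibration of $\omega(\varepsilon)$ against the two decay rates should deliver uniform convergence $V^\varepsilon \to V$ on $\bar Q$, and thus $V \in C(\bar Q)$.
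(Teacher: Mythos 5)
Your overall strategy (approximate $V$ by the continuous penalized functions $V^\varepsilon$, use Proposition~\ref{t-rhoge} to force $\Lambda^\varepsilon \to 0$ along a constant control started on $\partial O$, and conclude by uniform convergence) is the paper's strategy, but two essential steps are missing or incorrect.

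First, you never perform the reduction to the case $g=0$, and without it your boundary claim is false. By the definition \eqref{eq:coste}, the terminal cost enters $J^\varepsilon$ only through the term $\Lambda^\varepsilon(T,X)g(T,X_T)$; so when Proposition~\ref{t-rhoge} drives $\Lambda^\varepsilon(s,Y)\to 0$ for $s>t_0$ at a lateral boundary point $(t_0,y_0)$, you get $J^\varepsilon(t_0,y_0,a)\to 0$, \emph{not} $\to g(t_0,y_0)$ as you assert. Hence for general $g$ the scheme $V^\varepsilon$ simply does not converge to $V$ on $[0,T)\times\partial O$. The role of hypothesis \eqref{eq:a-conp2} is precisely to repair this: setting $\tilde\ell(t,x,a)=\ell(t,x,a)+G^a g(t,x)$, Dynkin's formula gives $J(t,x,\alpha)=\E_{t,x}\int_t^\tau \tilde\ell\,ds + g(t,x)$, so one may assume from the outset that $g\equiv 0$ and the running cost is nonnegative. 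Only then is $V(t_0,y_0)=0$ the correct boundary limit, and only then does nonnegativity of the running cost give the easy global inequality $V\le V^\varepsilon$. Your alternative route via It\^o's formula applied to $\Lambda^\varepsilon(s,X)\bigl(u(s,X_s)+g(s,X_s)\bigr)$ produces an uncontrolled drift term of the form $-\varepsilon^{-1}\hat d(X_s)\Lambda^\varepsilon\,(u+g)(s,X_s)$ whose sign you cannot fix, so that computation does not close.

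Second, your passage from pointwise boundary convergence to uniform convergence on $\bar Q$ is not carried out: you correctly observe that the constant $C_\varepsilon$ in Lemma~\ref{l-vecon} depends on $\varepsilon$ and that equicontinuity fails near the boundary, but the ``collar decomposition'' you propose is left entirely speculative. The actual mechanism is simpler and different. With $\ell\ge 0$ and $g=0$, the family $V^\varepsilon$ is monotone in $\varepsilon$ and decreases pointwise to $0$ on the compact set $[0,T]\times\partial O$, so Dini's theorem upgrades this to uniform convergence there, i.e. $h(\varepsilon):=\sup_{[0,T]\times\partial O}V^\varepsilon\to 0$. Then the dynamic programming principle for $V^\varepsilon$ applied at the exit time $\tau$, together with $\Lambda^\varepsilon\equiv 1$ on $[t,\tau]$, propagates the boundary estimate into the interior and yields $V\le V^\varepsilon\le V+h(\varepsilon)$ on all of $\bar Q$. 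In particular the interior estimate is \emph{derived from} the boundary one via the DPP; it is not an independent ``stability'' fact as your sketch suggests, and your proposed truncation-at-$\tau$ argument for near-optimal controls of $V^\varepsilon$ in the interior is not needed once this is in place.
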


\begin{rem}
In \cite[Pages 202-203]{FS06}, a sufficient condition for the continuity of the value function is given: $L_t^a \hat \rho (y)>0$ for some $a\in A$ for all $(t, y) \in [0,T)
\times \partial O$.
\thmref{t-vctn} provides an alternative sufficient condition:
$\|\sigma'(t,x,a) D\hat \rho(x)\|>0$ for some $a\in A$ for all $(t,x)
\in [0,T) \times \partial O$.
\end{rem}

\begin{proof} The proof is divided into two steps.

\noindent \textbf{(i)} Assume that $\ell \ge 0, g= 0$ on
$\mathbb{R}_{+}\times \mathbb{R}^n \times A$. 
    Fix $(t,x) \in [0,T)\times \partial O$. Let $a \in A$  satisfy
    \eqref{eq:t-rhoge1}. Consider the constant control process $\{a_s
    \equiv a: s\ge t\}$ and let $Y$ denote the corresponding
    process governed by this constant control. By \thmref{t-rhoge} for
    $s\in (t,T]$ we have 
    $$\begin{array}{ll}
      \displaystyle \int_t^s \hat d(Y_r) dr >0,  \quad \P-\text{a.s.}
    \end{array}$$
    Hence,
    $$\begin{array}{ll}
      \lim_{\varepsilon \to 0^+}\Lambda^\varepsilon(s, Y) = 0 \quad
      \P-a.s.
    \end{array}$$
    By Dominated Convergence Theorem, one can conclude that
    $$\begin{array}{ll}
      \lim_{\varepsilon \to 0^+} \mathbb{E}_{t,x}\left \{ \int_t^T
        \Lambda^\varepsilon(s, Y) \ell(s, Y_s, a) ds +
        \Lambda^\varepsilon(T, Y) g(T, Y_T)\right\}=0.
    \end{array}$$
    This implies 
    $$\lim_{\varepsilon \to 0^+} J^\varepsilon(t,x,a) = 0.$$
    Together with  $J^\varepsilon(t,x,a) \ge V^\varepsilon(t,x) \geq
    0$ which follows from \eqref{eq:value-func-eps}, the above implies
    that 
    \begin{equation}
      \label{eq:rk2}
      \lim_{\varepsilon\to 0^+} V^\varepsilon(t,x) = 0 = V(t,x), \quad
       (t,x)\in [0,T]\times \partial O.
    \end{equation}
    Therefore, $V^\varepsilon(t,x)$ is continuous (\lemref{l-vecon})
    on the compact set $[0,T] \times \partial O$ in
    $\mathbb{R}^{n+1}$, and it monotonically 
    converges to the zero function. Dini's theorem implies that
    $\lim_{\varepsilon \to 0^+} V^\varepsilon (t,x) = 0$ uniformly on
    $[0,T] \times \partial O$. Thanks to the uniform convergence, if we set
    $$\begin{array}{ll}
      h(\varepsilon) \triangleq \sup\{V^\varepsilon(t,x): (t,x)
      \in [0,T] \times \partial O\},
    \end{array}$$
     we have that $\lim_{\varepsilon\to 0^+}h(\varepsilon)  = 0$.
    
    Now we are ready to prove the continuity of the value function
    $V$. Let $(t,x) \in Q$. Applying the dynamic programming
    principle to $V^\varepsilon(\cdot, \cdot)$ with respect to
    stopping time $\tau$ of \eqref{eq:tau}, and using the fact that
    $\Lambda^\varepsilon(s, X_s^{t,x,\alpha}, \alpha_s) \equiv 1$ for
    $s\le \tau$ and $\alpha \in \mathcal{A}$, we obtain 
    \begin{equation}
      \label{eq:rk1}
      \begin{array}{ll}
        V^\varepsilon(t,x) & \displaystyle = \inf_{\alpha\in
          \mathcal{A}} \left \{ \mathbb{E}_{t,x} 
          \left[\int_t^\tau \ell (s, X_s^{t,x,\alpha}, \alpha_s) ds +
            V^\varepsilon (\tau, X^{t,x,\alpha}_{\tau})\right] \right\} \\
        & \le \displaystyle \inf_{\alpha\in \mathcal{A}} \left \{
          \mathbb{E} \left[\int_t^\tau \ell 
            (s, X^{t,x,\alpha}_s, \alpha_s) ds \right]\right\}+
        h(\varepsilon), \quad \hbox{ since } (\tau, X^{t,x,\alpha}_{\tau})
        \in \partial^* Q\\
        & = V(t,x) + h(\varepsilon).
      \end{array}
    \end{equation}
    Since $\ell \ge 0$, we further have that
    $$\begin{array}{ll}
      V(t,x) \le V^\varepsilon(t,x) \le V(t,x) + h(\varepsilon), \
      \forall (t,x) \in \bar Q
    \end{array}$$
    This implies $V^\varepsilon \to V$ uniformly on $\bar Q$.   Since $V^\varepsilon$ is continuous by \lemref{l-vecon}, the value function $V$ is
    also continuous. \\
    \\
\noindent \textbf{(ii)} The proof follows from (i) once we let $\tilde{l}(t,x,a) \triangleq l(t,x,a)+G^{a}g(t,x)$ and consider \eqref{eq:valp} and \eqref{eq:value-func-eps} by setting $l=\tilde{l}$ and $g=0$.
   
\end{proof}

Next, we give an example, whose value function is
continuous, although it does not satisfy the sufficient condition of
\cite{FS06}. In this example, we first consider a deterministic exit
time problem. We observe that this problem does not have a continuous
value function. Next, we consider a degenerate random version of the
same problem. In this problem, the sufficient condition $L_t^a\hat
\rho(x) >0$ of \cite{FS06} holds only for some points $x$ on the
boundary. Yet, it still satisfies the sufficient condition of
\eqref{eq:t-rhoge1} on the entire boundary, and therefore, the value
function is continuous.

\begin{exm} \label{e-1d}
  {\rm
    \noindent \textbf{(i)} Let $X^{t,x}_s$, $s \geq t$, be the
    one-dimensional process satisfying 
    \begin{equation*}
      d X^{t,x}_s = -2(s-1) ds, \ X^{t,x}_t = x.
    \end{equation*}
    Let $Q = [0,2)\times (-1,1)$, and $\tau^{t,x} = \inf\{s>t:
    X^{t,x}(s) \notin (-1,1) \}$. Let us define
    the value function as $V(t,x) = (\tau^{t,x}\wedge 2) - t$. Then,
    $X^{t,x}$ has an 
    explicit form: $$X^{t,x}_s = -(s-1)^2 + x+(t-1)^2.$$ Therefore,
    the function $s\to X_s^{t,x}$ first increases towards its
    maximum $$ 
      \max_{s\ge t} X^{t,x}_s = x + (t-1)^2,$$ and upon reaching it decreases
    to $-\infty$. Thus, if $x+(t-1)^2 \ge 1$, then $X^{t,x}_{\tau^{t,x}} = 1$, otherwise
    $X^{t,x}_{\tau^{t,x}} = -1$. As a result, for $t \in[0,1]$, $V(t,x)$
    is discontinuous at every point on the parabola
    \begin{equation*}
     \left\{(t,x) \in Q: \max_{s\ge t} X_s = 1\right\} =  \left\{(t,x)
       \in Q: x = -t^2 + 2t\right\}. 
    \end{equation*} 
  We also note that, \eqref{eq:t-rhoge1} does not hold, since
  $$\max\{L^a_t \hat \rho(\pm 1), \|\sigma'(t,x,a)\hat \rho(\pm 1)\| \} =0,
  \quad \forall t\in (0,1).$$ \\
  
  \noindent \textbf{(ii)} Next, we consider the following state process, which we obtain by adding a random perturbation to the
  above deterministic process:
  \begin{equation*}
    d X^{t,x}_s = -2(s-1) ds + (2s - X_s^{t,x})^+ dW_s, \ X^{t,x}_t = x.
  \end{equation*}
This equation admits a unique strong solution since the coefficients are Lipschitz continuous. Let us define the value function to be
  $V(t,x)\triangleq \mathbb{E}_{t,x} [(\tau^{t,x}\wedge 2) -
  t]$. Note
    that, $\hat \rho(\cdot)$ of \eqref{eq:rhoh} satisfies
    \begin{equation}
      \label{eq:e-ndg1}
      \hat \rho(x) = (x-1) \one_{\{x\ge 0\}} + (-1-x) \one_{\{x<0\}},
      \ D\hat \rho(x) = \mathop{sgn}(x), \hbox{ and } D^2 \hat \rho(x)
      \equiv 0. 
    \end{equation}
  As a result,  
  $$L_t \hat \rho(1) = -2(t-1)>0 \hbox{ on } t\in (0,1); \quad 
  |\sigma(t,1) D\hat \rho(1)| = (2t-1)^+>0 \hbox{ on } t\in (1/2, 2),$$ 
  and
  $$L_t \hat \rho(-1) = 2(t-1)>0 \hbox{ on } t\in (1, 2); \quad 
  |\sigma(t,1) D\hat \rho(-1)| = (2t+1)^+>0 \hbox{ on } t \in (0, 2).$$   
  Although, the condition $L_t^a\hat
\rho >0$, which is the sufficient condition given by \cite{FS06}---see equation (2.8) on page 202--- 
 fails on the boundary, 
  the
  continuity of the value function follows from 
\thmref{t-vctn}. 
\qed}
\end{exm}

\section{Appendix}\label{ss-exist}

\subsection{ Proof of Proposition~\ref{t-exist1}} 
First, we will develop the following auxiliary result.

\begin{lem}
  \label{l-estop}
For a given $ (t,x) \in Q$, define $$\theta= \inf\{s>t: (s,
X_s) \notin [t,t+h^2) \times B(x,h)\},$$ where $B(x,h)$ is a ball centered at $x$ with radius $h \in (0,1)$.  Then, 
there exists a constant $K$, which does not depend on the control $\alpha$, such that
$$\begin{array}{ll}
  \mathbb{E}_{t,x} [\theta -t] \ge K h^2. 
\end{array}$$
\end{lem}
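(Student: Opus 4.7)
The plan is to combine a standard moment estimate on the deviation $\|X_r - x\|$ with Chebyshev's inequality, and conclude by integrating the tail of $\theta - t$.

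First, using Assumption~\ref{a-sde}(1)--(2) and the boundedness of $O$, I would establish the uniform bound
$$\mathbb{E}_{t,x}\Bigl[\sup_{r \in [t, t+s]} \|X_r - x\|^2\Bigr] \le C_1 s, \qquad s \in [0,1],$$
where $C_1$ depends only on $K$, $T$, and $R := \sup_{y \in \bar O}\|y\|$, but not on the control $\alpha \in \mathcal{A}$ or on $(t,x) \in Q$. This is standard: decompose $X_r - x = \int_t^r b\,du + \int_t^r \sigma\,dW$, apply Cauchy--Schwarz to the drift part and Doob's maximal inequality to the stochastic integral, then use the linear growth of $b,\sigma$ together with the classical a priori bound $\sup_{r \le T}\mathbb{E}\|X_r\|^2 \le C(1 + \|x\|^2)$. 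All these constants are uniform in $\alpha$ because Assumption~\ref{a-sde} is.

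Next, since for $s \in [0, h^2)$ the event $\{\theta \le t+s\}$ forces $\sup_{r \in [t,t+s]}\|X_r - x\| \ge h$, Chebyshev's inequality gives
$$\mathbb{P}_{t,x}(\theta \le t+s) \;\le\; \frac{1}{h^2}\,\mathbb{E}_{t,x}\Bigl[\sup_{r \in [t,t+s]}\|X_r - x\|^2\Bigr] \;\le\; \frac{C_1 s}{h^2}.$$
Setting $s_0 := h^2/(2C_1) \wedge h^2$, one has $\mathbb{P}_{t,x}(\theta > t+s) \ge 1/2$ for every $s \in [0, s_0]$.

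Finally, writing the expectation as an integral of the survival function (which is valid since $0 \le \theta - t \le h^2$),
$$\mathbb{E}_{t,x}[\theta - t] = \int_0^{h^2} \mathbb{P}_{t,x}(\theta - t > s)\,ds \;\ge\; \int_0^{s_0} \tfrac{1}{2}\,ds \;=\; \tfrac{s_0}{2} \;\ge\; K h^2,$$
with $K := (4C_1)^{-1} \wedge 1/2$, which is independent of $\alpha$, of $(t,x) \in Q$, and of $h \in (0,1)$. The main obstacle, though not deep, is bookkeeping constants in the SDE moment estimate so that they are uniform in the control; this uniformity hinges on the linear-growth/Lipschitz Assumption~\ref{a-sde} and on the boundedness of $O$, which supplies the uniform bound on $\|x\|$.
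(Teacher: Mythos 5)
Your argument is correct, but it takes a genuinely different route from the paper. You control the exit probability via Chebyshev's inequality applied to the standard maximal moment estimate $\mathbb{E}_{t,x}[\sup_{r\in[t,t+s]}\|X_r-x\|^2]\le C_1 s$ (Doob plus Gronwall, uniform in $\alpha$ by the linear growth/Lipschitz assumptions and the boundedness of $O$), and then integrate the survival function of $\theta-t$, using correctly that an exit before time $t+s<t+h^2$ must be a spatial exit. The paper instead applies It\^{o}'s formula to $f(y)=\|y-x\|^2$, so that
\begin{equation*}
\mathbb{E}_{t,x}\bigl[f(X_\theta)\bigr]=\mathbb{E}_{t,x}\Bigl[\int_t^\theta L_s^{\alpha_s}f(X_s)\,ds\Bigr]\le K_{t,x}\,\mathbb{E}_{t,x}[\theta-t],
\end{equation*}
where $K_{t,x}$ bounds $|L^a_s f|$ on the compact set $[t,t+1]\times\bar B(x,1)\times A$; splitting the left side over $\{\theta<t+h^2\}$ (where $f(X_\theta)=h^2$) and $\{\theta=t+h^2\}$, and adding the trivial bound $\mathbb{E}_{t,x}[\theta-t]\ge h^2\,\mathbb{P}(\theta=t+h^2)$, yields $(K_{t,x}+1)\mathbb{E}_{t,x}[\theta-t]\ge h^2$. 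The paper's computation is more self-contained --- it needs only It\^{o}'s formula and a pointwise bound on the generator, with no maximal inequality or tail integration --- while your version buys a constant that is manifestly uniform over $(t,x)\in Q$ as well as over $\alpha$, which is slightly stronger than what the lemma asserts (the paper's $K_{t,x}$ a priori depends on $(t,x)$, though it too can be made uniform since $\bar O$ is bounded). Both constants are independent of $h\in(0,1)$, as required for the supersolution argument where the lemma is used.
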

\begin{proof}
  Let 
  $f(y) = \|y-x\|^2$.  
  Applying It\^{o}'s formula and taking expectations yield
   \begin{equation}\label{eq:l-estop1}
    \mathbb{E}_{t,x}\{ f(X_{\theta}) - f(x) \} = \mathbb{E}_{t,x} \left\{
    \int_t^\theta L_s^{\alpha_s} f(X_s) ds\right\}.
  \end{equation}
  Since $[t,t+1] \times \bar B(x,1)
  \times A$ is compact, by continuity
  $$\begin{array}{ll}
    \displaystyle \sup_{(s,x,a) \in   [t,t+1] \times \bar B(x,1) \times
      A} |L_s^a f(x)| \le K_{t,x}
    <\infty,
  \end{array}$$
  for some constant $K_{t,x}$.
  Since $(s, X_s, \alpha_s) \in [t,t+1] \times \bar
  B(x,1) \times A $ for any $s\in [t,\theta]$ the integrand in \eqref{eq:l-estop1} is 
  bounded above by $ K_{t,x}$. Since $f(x) = 0$, we can write
  \eqref{eq:l-estop1}  as
  \begin{equation*}
    \mathbb{E}_{t,x} \left[\one_{\{\theta = t + h^2\}} f(X_{\theta})\right] + \mathbb{E}_{t,x}
   \left[ \one_{\{\theta < t+h^2\}} h^2 \right] = \mathbb{E}_{t,x} \left [\int_t^\theta
    L^{\alpha_s} f(X_s) ds \right]\le  K_{t,x} \mathbb{E}_{t,x} \left[\theta
    - t\right]. 
  \end{equation*}
  On the other hand,
  $$\begin{array}{ll}
    \mathbb{E}_{t,x} [\theta -t] \ge \mathbb{E}_{t,x} \left[(\theta - t) \one_{\{\theta
      = t + h^2\}}\right] = h^2 \mathbb{E}_{t,x} \left[ \one_{\{\theta = t + h^2\}}\right].
  \end{array}$$
  Adding the last two inequalities, we get
  $$\begin{array}{ll}
    ( K_{t,x} + 1) \mathbb{E}_{t,x} [\theta -t] \ge h^2 +
    \mathbb{E}_{t,x}\left[ \one_{\{\theta = t + h^2\}} f(X_{\theta})\right] \ge h^2 . 
  \end{array}$$
  The result follows by setting $K \triangleq 1/(K_{t,x} + 1)$. 
\end{proof}
Now, we are ready to prove \propref{t-exist1}. 
\begin{proof}[Proof of \propref{t-exist1}] \hfill \\
\noindent \textbf{(i)}
We will first show that $V$ is a subsolution of \eqref{eq:ppde}.
We will prove the assertion by a contradiction argument.
 Let us assume that there $(t,x;\varphi)$ as in Definition~\ref{d-vsol}-(i) such that
  \begin{equation*}
    \ell (t,x,a) + G^a \varphi (t,x) < - \delta,  \end{equation*}
    for some $\delta>0$.
  Then, by continuity of $\ell + G^a \varphi
  $ in $(t,x)$, there exists $h>0$ such that
  \begin{equation*}
    \ell(s,y,a) + G^a \varphi(y,a) < - \frac \delta 2 < 0, \quad \forall
    (s,y) \in [t, t+h^2) \times B(x,h) \subset Q.
  \end{equation*}
  Let $Y$ be the process which can be obtained by applying the control $\alpha \equiv a$ and define
  \begin{equation*}
    \theta = \inf \{ s>t, Y_s \notin B(x,h)\} \wedge (t+h^2).
  \end{equation*}
  By the dynamic programing principle
  \begin{equation*}
    V(t,x) \le \mathbb{E}_{t,x} \left\{ \int_t^\theta \ell(s,Y_s,a) ds +
    V(\theta, Y_{\theta}) \right\}.
  \end{equation*}
  It follows from how $\varphi$ is chosen that
  $$\begin{array}{ll}
    0 & \le \displaystyle \mathbb{E}_{t,x} \left\{ \int_t^\theta
    \ell(s,Y_s,a) ds + \varphi(\theta,Y_{\theta}) - \varphi(t,x) \right
    \}\\ 
    & = \displaystyle \mathbb{E}_{t,x} \left\{ \int_t^\theta [\ell (s,Y_s,a)
    + G^a \varphi(s,Y_s)] ds \right\}  <- \mathbb{E}_{t,x} \left\{ \int_t^\theta \left(  \frac \delta 2 \right)
    ds \right\} <0, 
  \end{array}$$
  which yields a contradiction.\\ \\
  \noindent \textbf{(ii)} We will now show that $V$ is a supersolution of \eqref{eq:ppde}. We will, again, use proof by contradiction. Let us assume that there exists a triplet $(t,x;\varphi)$ as in Definition~\ref{d-vsol}-(ii) such that 
  \begin{equation*}
    \label{eq:t-exist11}
    \inf_{a\in A} \{\ell(t,x,a) + G^a \varphi(t,x)\} = \delta>0, 
  \end{equation*}
  As a function of $(t,x)$, $\ell(t,x,a) + G^a \varphi(t,x)$
  is equicontinuous in $A$, by  \asmref{a-sde}. Therefore,
  \begin{equation*}
    \inf_{a\in A} \{\ell(t,x,a) + G^a \varphi(t,x)\}
  \end{equation*}
  is also continuous in $(t,x)$. So, one can find $h>0$ such that
  \begin{equation*}
    \inf_{a\in A} \{\ell(s,y,a) + G^a \varphi(s,y)\} > \frac \delta 2
    >0, \quad \forall (s,y) \in [t,t+h^2) \times B(x,h).
  \end{equation*}
  Let $\varepsilon = \frac \delta 4 K h^2$, where $K$ is
  the constant in \lemref{l-estop}. Let $\alpha$ be
  $\varepsilon$-optimal control and define
  \begin{equation*}
    \theta = \inf\{ s>t: X_s \notin B(x,h)\} \wedge (t+h^2).
  \end{equation*}
  Then
  $$\begin{array}{ll}
    V(t,x) & \displaystyle \ge \mathbb{E}_{t,x} \left\{\int_t^\tau \ell(s, X_s,
    \alpha_s) ds + g(\tau, X_{\tau})\right\} - \varepsilon \\
    & \displaystyle \ge \mathbb{E}_{t,x} \left\{\int_t^{\theta} \ell(s, X_s,
    \alpha_s) ds + V({\theta}, X_{\theta})\right\} -  \varepsilon,
  \end{array}$$
  In the following, we obtain the desired contradiction:
  $$\begin{array}{ll}
    0 &\displaystyle \ge \mathbb{E}_{t,x} \left\{\int_t^{\theta} \ell
    (s, X_s, \alpha_s) ds + \varphi
    ({\theta}, X_{\theta}) - \varphi(t,x)\right\} -
    \varepsilon, 
    \\  
    &\displaystyle = \mathbb{E}_{t,x}\left\{ \int_t^{\theta}
    [\ell (s, X_s, \alpha_s) +
    G^{\alpha_s}\varphi (s,X_s)] ds \right\} -
    \varepsilon \\ 
    & \displaystyle \ge \mathbb{E}_{t,x} \left\{\int_t^{\theta}
    [\ell (s, X_s, \alpha_s)  +
    G^{\alpha_s}\varphi (s,X_s)] ds \right\} - \frac
    \delta 4 \mathbb{E}_{t,x}[{\theta} -t], \quad \hbox{ by
      \lemref{l-estop}}\\  
    & \displaystyle = \mathbb{E}_{t,x} \left\{\int_t^{\theta}
    \left[\ell (s, X_s, \alpha_s)  +
    G^{\alpha_s}\varphi (s,X_s) - \frac
    \delta 4\right] ds \right\}\\ 
    & \displaystyle\ge \frac \delta 4 \mathbb{E}_{t,x}[{\theta} -t]  >0.
  \end{array}$$
\end{proof}

\subsection{Proof of Lemma~\ref{l-vecon}}
  First, it can be checked that the following inequality holds:
  $$\begin{array}{ll}
    |\hat d(x^1) - \hat d(x^2) | \le \|x^1 - x^2\|, \quad x^1, x^2
    \in \mathbb{R}^n.
  \end{array}$$
 As a result
  \begin{equation*}
  \begin{split}
    |\Lambda^\varepsilon (s, X^1) - \Lambda^\varepsilon (s, X^2)| 
    &= \displaystyle\left|\exp\left\{ - \frac 1 \varepsilon \int_t^s \hat
    d(X^1_r) dr\right\} - \exp \left\{- \frac 1 \varepsilon \int_t^s
    \hat d(X^2_r) dr\right\}\right| \\ 
    &\le\displaystyle \frac 1 \varepsilon \left|\int_t^s \hat d(X^1_r)
    - \hat d(X^2_r) dr \right|  
    \le\displaystyle \frac 1 \varepsilon \int_t^s \|X^1_r - X^2_r\| dr
    \\ &\displaystyle \le \frac 1 \varepsilon (s-t) \sup_{r\in [t,s]}
    \|X^1_r - X^2_r\|. 
  \end{split}
  \end{equation*}
  For $\varphi  = \ell, g$ we have that
  \begin{equation*}
  \begin{array}{ll}
    \displaystyle \mathbb{E}_{t,x} \left\{|\Lambda^\varepsilon (s, X^1)
    \varphi(s,X^1_s) - \Lambda^\varepsilon (s,X^2) \varphi(s,
    X^2_s)|\right \}\\ 
    \displaystyle \le \mathbb{E}_{t,x} \left\{|(\Lambda^\varepsilon(s, X^1) - \Lambda^\varepsilon
    (s, X^2)) \varphi(s, X^1_s)| \right\}+ \mathbb{E}_{t,x} \left\{|\Lambda^\varepsilon
    (s, X^2) (\varphi(s,X^1_s) - \varphi(s, X^2_s))|\right\}\\
    \displaystyle \le \left(\mathbb{E}_{t,x} |\Lambda^\varepsilon(s, X^1) - \Lambda^\varepsilon
    (s, X^2)|^2\right)^{1/2} \left(\mathbb{E}_{t,x}|\varphi(s, X^1_s)|^2\right)^{1/2} + \\
    \hspace{1in}   \left(\mathbb{E}_{t,x} |\Lambda^\varepsilon 
    (s, X^2)|^2\right)^{1/2}  \left( \mathbb{E}_{t,x} |\varphi(s,X^1_s) - \varphi(s,
    X^2_s)|^2\right)^{1/2} \\
    \le \displaystyle \frac{1}{\varepsilon}(s-t) \left(\mathbb{E}_{t,x} (\sup_{r\in [t,T]}
    \|X^1_r - X^2_r\| )^2\right)^{1/2} + K \left(\mathbb{E}_{t,x} |X^1_s - X^1_s
    |^{2}\right)^{1/2}\\ 
    \le C |x^1 - x^2|,
  \end{array}\end{equation*}
  for some positive constant $C$.
In the above derivation, we utilized 
  $$\begin{array}{ll}
    \mathbb{E} \left[\sup_{t\le s \le t_1} \|X^1_s - X^2_s\|^2\right] \le C
    \|x^1 - x^2\|^2, \quad t\le t_1 \le T,
  \end{array}$$
  for another positive constant $C$.
  Now, we are ready to prove the regularity of $V^\varepsilon$ in $x$. For any
  $x^1, x^2 \in O$,
  \begin{equation*}
  \begin{split}
    |V^\varepsilon(t,x^1) - V^\varepsilon (t,x^2)|
      &\le \sup_{\alpha \in \mathcal{A}} \Bigg\{\mathbb{E}_{t,x}\left[ \int_t^T|\Lambda^\varepsilon(s,X^1) \ell (s,
    X^1(s), \alpha_s) - \Lambda^\varepsilon (s, X^2) \ell(s, X^2(s),
    \alpha_s)| ds\right] \\
   & +\mathbb{E}_{t,x}\left[ \left|\Lambda^\varepsilon(T,X^1) g(T,X^1(T)) -
    \Lambda^\varepsilon(T,X^2) g(T,X^2(T)\right|\right]\Bigg\} \\
    \le & C \|x^1 - x^2\|,
 \end{split}
  \end{equation*}
  for some positive constant $C$. Please refer to \cite{Kry80} for the moment inequalities we used above.
  
  Let us prove the regularity of the value function in $t$. For $t_1<t_2$, we can use the dynamic programming principle to write
  \begin{equation*}
  \begin{split}
    |V^\varepsilon(t_1,x) - V^\varepsilon(t_2,x)|
     &\le \sup_{\alpha} \int_{t_1}^{t_2} \mathbb{E}_{t,x}
    |\Lambda^\varepsilon (s,X) \ell (s,X_s,\alpha_s)| ds +
    \sup_\alpha \mathbb{E}_{t,x} 
    |V^\varepsilon (t_2,X(t_2)) - V^\varepsilon (t_2,x)| \\
    & \le C \left[ \sup_\alpha \int_{t_1}^{t_2} \mathbb{E}_{t,x}
    \left(1+\|X_s\|^2 \right) ds + \mathbb{E}_{t,x} \|X_{t_2}-
    x\| \right]\\ 
    &\le C_1 (t_2 - t_1) +C_2 (t_2 - t_1)^{1/2} \le (C_1 T+C_2) (t_2 - t_1)^{1/2},     
  \end{split}\end{equation*}
  in which $C$, $C_1$ and $C_2$ are positive constants. Here, we used the facts that
  \[
  \E \left[\sup_{0 \leq s \leq T} \|X_s\|^2\right]<\infty,
  \]
 and 
 \[
\sup_{\alpha \in \mathcal{A}} \E_{t,x}\left[\left\|X_s-x\right\|\right] \leq C |s-t|^{1/2},
 \]
 for some constant $C$. \hfill $\square$

\bibliographystyle{plain}


\end{document}